\makeatletter\@addtoreset{equation}{section}\makeatother
\newtheorem{thm}{Theorem}
\newtheorem{lem}{Lemma}[section]
\theoremstyle{definition}
\newcommand{\drm}{\mathrm{d}}
\newcommand{\R}{\mathbb{R}}
\begin{document}

\title{Exact minimum speed of traveling waves in a Keller--Segel model}

\author{
Jason J. Bramburger\\
Department of Mathematics and Statistics\\
University of Victoria\\
Victoria, BC, V8P 5C2, Canada
}

\date{}
\maketitle

\begin{abstract}
In this paper we present a Keller--Segel model with logistic growth dynamics arising in the study of chemotactic pattern formation. We prove the existence of a minimum wave speed for which the model exhibits nonnegative travelling wave solutions at all speeds above this value and none below. The exact value of the minimum wave speed is given for all biologically relevant parameter values. These results strengthen recent results where non-sharp upper and lower bounds on the minimum wave speed were derived in a restricted parameter regime.
\end{abstract}

%%%%%%%%%%%%%%%%%%%%%%%%%%%%%%%%%%%%%%%%%%%%%%%%%%%%%%%%%%%%%%%%%%%%%%%%%%%%

\section{Introduction}

Chemotaxis can be responsible for fascinating pattern-forming behaviour in bacterial populations, even in homogeneous landscapes \cite{Adler,Adler2,Gharasoo,Murray}. To better understand such complex behaviour Keller and Segel proposed the class of relatively simple mathematical models to accurately describe chemotaxis, which now bear their name \cite{Keller,Keller2}. In this letter we will consider such a Keller--Segel model for chemotaxis of the form 
\begin{equation}\label{KS}
	\begin{split}
		u_t &= u_{xx} - (u\chi(v)v_x)_x + \mu u(1-u), \\
		v_t &= Dv_{xx} +  \beta v - u.
	\end{split}	
\end{equation}
The real-valued quantities $u(x,t) \geq 0$ and $v(x,t) \geq 0$ denote the cell density and the concentration of the chemical signal, respectively, while subscripts denote partial differentiation with respect to the spatial variable $x \in \mathbb{R}$ and the temporal variable $t \geq 0$. The parameter $\mu > 0$ represents the rate of logistic cell growth, while $\chi(v) \geq 0$, the chemotactic sensitivity, measures the effect the chemical substance that is produced over time has on the cell population. In the second equation of \eqref{KS} the constant $\beta > 0$ describes the chemical growth rate, and the rate at which the chemical is consumed by the cells is fixed to be 1 to simplify the notation. Finally, the diffusion coefficient $D \geq 0$ is taken to represent the ratio of the diffusion coefficients of the cell and the chemical attractant.   

Among the simplest nontrivial solutions to biologically relevant spatially extended systems are traveling waves. Such solutions are steady spatial patterns that linearly propagate over the spatial medium at a constant speed, and in the context of \eqref{KS} represent the densities spreading or receding over the spatial medium. In the present case of system \eqref{KS}, the logistic nonlinearity in the first equation induces monostable nonlinear dynamics, the presence of which often leads to a nonzero minimum wave speed, denoted $c_* $, such that traveling waves exist at all faster speeds and no slower ones \cite{Saarloos}. The importance of the minimum wave speed goes beyond just the existence of traveling wave solutions since it has been proven in a number of monostable systems that compactly supported initial conditions will spread with asymptotic speed given by $c_*$ \cite{Aaronson, Gray, KPP, Saarloos, Stokes}. Compactly supported initial conditions are of course more biologically relevant since they represent initial population densities localized to a region of space, and the analysis of minimum wave speeds to traveling wave solutions provides an avenue for understanding the spread of these populations.   

In the context of \eqref{KS}, a traveling wave moving at speed $c > 0$ takes the form $(u(x,t),v(x,t)) = (U(x-ct),V(x-ct))$, where the nonnegative profiles $U$ and $V$ satisfy $(U(-\infty),V(-\infty)) = (1,\beta^{-1})$ and $(U(\infty),V(\infty)) = (0,0)$. Such a solution describes a connection between the homogeneous steady-states $(u,v) = (0,0)$ and $(u,v) = (1,\beta^{-1})$ for which the latter is advancing into the former. The existence of a minimum wave speed $c_*$ for system \eqref{KS} with $D = 0$ was recently confirmed in \cite{Li}, meaning that traveling wave solutions of \eqref{KS} exist for all speeds $c \geq c_*$. Under the assumption that $0 \leq \chi(v) \leq \mu$ for all $v \in [0,\beta^{-1}]$ the authors provided the bound    
\begin{equation}\label{LiBnd}
	\begin{split}
		2\sqrt{\mu}\leq c_* \leq \max_{0 \leq v \leq \beta^{-1}}\bigg\{2\sqrt{\mu},\sqrt{\frac{\beta}{\mu}\chi(v)}\bigg\}.
	\end{split}
\end{equation}
Notice that in certain parameter regimes the gap between upper and lower bounds can be very large, giving little information about the minimum wave speed. For example, previous studies have considering Keller--Segel models similar to \eqref{KS} with $\chi(v)$ a constant function \cite{Landman,Nadin,Salako,Salako2}, which in our case implies that considering large $\beta$ can produce bounds on $c_*$ that give little information aside from the existence of a minimum wave speed. Using the computational techniques for bounding the minimum wave speed put forth in \cite{Bramburger}, it was noticed that in all tested cases the upper bound is independent of the choice of the function $\chi$. Informed by these numerical results, in this letter we refine the results of \cite{Li} to produce the exact value of $c_*$ with the following theorem.

\begin{thm}\label{thm:Main1}%Theorem: Main theorem about minimum wave speed of traveling waves for D = 0
	Fix $D = 0$. Then, for all $\beta,\mu > 0$ and functions $\chi$ satisfying $0 \leq \chi(v) \leq \mu$ for all $v \in [0,\beta^{-1}]$, the system \eqref{KS} has a unique nonnegative traveling wave solution $(u(x,t),v(x,t)) = (U(x-ct),V(x-ct))$ satisfying $(U(-\infty),V(-\infty)) = (1,\beta^{-1})$ and $(U(\infty),V(\infty)) = (0,0)$ if, and only if, $c \geq 2\sqrt{\mu}$. Furthermore, this traveling wave satisfies 
	\begin{equation}\label{Ordering}
		0 \leq U(x-ct) \leq \beta V(x-ct) \leq 1,
	\end{equation}
	for all $x \in \mathbb{R}$ and $t \geq 0$ and the profiles $U,V$ are monotone decreasing.
\end{thm}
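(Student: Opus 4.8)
The plan is to recast the problem as a heteroclinic connection in a finite–dimensional flow and to split the threshold $c = 2\sqrt{\mu}$ into a necessity part, obtained from the linearization at the leading edge, and a sufficiency part, obtained by constructing the connection inside an invariant region defined by the ordering \eqref{Ordering}. Writing $\xi = x - ct$ and substituting $(u,v) = (U(\xi),V(\xi))$ into \eqref{KS} with $D = 0$, the second equation becomes the first–order relation $cV' = U - \beta V$, so that $U = \beta V + cV'$, while the first equation yields
\begin{equation*}
	U'' + cU' - (U\chi(V)V')' + \mu U(1-U) = 0 .
\end{equation*}
Setting $P = U'$ this gives an autonomous system in $(U,P,V)$ whose relevant equilibria are the leading–edge state $(0,0,0)$ and the back state $(1,0,\beta^{-1})$; the sought wave is a heteroclinic orbit from the latter to the former. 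I would first record that $U \le \beta V \iff V' \le 0$, so the ordering in \eqref{Ordering} is exactly the statement that $V$ (and, as will follow, $U$) is monotone decreasing.

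For necessity, I would linearize at the leading edge. Because both $U$ and $V'$ vanish as $\xi \to \infty$, the chemotactic flux $U\chi(V)V'$ is quadratic in the decaying variables and drops out to first order, so the $(U,P)$ block decouples from $V$ and reduces precisely to the Fisher--KPP linearization $U'' + cU' + \mu U = 0$. Its eigenvalues are $\lambda_\pm = \tfrac12(-c \pm \sqrt{c^2 - 4\mu})$, and a nonnegative profile cannot approach $0$ along a complex pair without oscillating through negative values; hence $c^2 \ge 4\mu$, i.e. $c \ge 2\sqrt{\mu}$. This reproduces the lower bound in \eqref{LiBnd} and furnishes the ``only if'' direction.

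For sufficiency, for each fixed $c \ge 2\sqrt{\mu}$ I would produce the connection by a shooting/Wazewski argument launched along the unstable manifold of $(1,0,\beta^{-1})$, confined to the slab
\begin{equation*}
	\mathcal{R} = \{\, 0 \le U \le \beta V \le 1 \,\}
\end{equation*}
(with $U'$ controlled along the flow), and verify that on each face of $\mathcal{R}$ the vector field points inward. This is where the hypothesis $0 \le \chi(v) \le \mu$ enters decisively: using $U = \beta V + cV'$ the flux can be written $U\chi(V)V' = c^{-1}\chi(V)\,U(U-\beta V)$, and the bound $\chi \le \mu$ together with $\beta V \le 1$ is exactly what is needed to dominate the chemotactic contribution by the logistic term and prevent a trajectory entering $\mathcal{R}$ from advancing to $U > \beta V$ or $\beta V > 1$. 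Trapping the orbit in $\mathcal{R}$ forces it into the basin of $(0,0,0)$, producing the heteroclinic; the invariance $0 \le U \le \beta V \le 1$ is the ordering \eqref{Ordering}, $V' \le 0$ delivers monotonicity of $V$, and monotonicity of $U$ then follows from the non-oscillatory, real-eigenvalue approach to the leading edge.

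Finally, uniqueness would follow from the one–dimensionality of the admissible approach to $(0,0,0)$ together with a sliding comparison inside $\mathcal{R}$. The main obstacle is the cross–diffusion term $(U\chi(V)V')'$: it is genuinely nonlinear and couples the two profiles, and the entire content of the theorem --- that the minimum speed is the bare Fisher--KPP value $2\sqrt{\mu}$ independently of $\chi$ --- rests on showing that under $\chi \le \mu$ this term neither lowers the threshold below $2\sqrt{\mu}$ nor breaks the invariance of $\mathcal{R}$. Establishing the inward–pointing property on the face $U = \beta V$, where the flux and the logistic term compete most directly, is where I expect the delicate estimate to lie.
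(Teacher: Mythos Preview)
Your framework---necessity from the Fisher--KPP linearization at the leading edge, sufficiency via a trapping region and a shooting argument from the unstable manifold of the back state---matches the paper's exactly. The genuine gap is that your region $\mathcal{R} = \{0 \le U \le \beta V \le 1\}$ lives only in the $(U,V)$ plane, with the derivative variable ``controlled along the flow'' but never pinned down. The paper works with $W = cU + U' - U\chi(V)V'$ and closes the region by the constraints $0 \le W \le cU$; the face checks on $U=0$, $\beta V=1$, and $U=\beta V$ are then routine, and the hypothesis $\chi \le \mu$ is used not on $U=\beta V$ (where the computation reduces to $\dot U - \beta\dot V = W - cU \le 0$ with no $\chi$ at all) but on the upper face $W=cU$, via the decomposition $\dot W - c\dot U = (\mu-\chi(V))U(U-\beta V) + \mu U(\beta V - 1)$. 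So your localization of the ``delicate estimate'' is off by one face.

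More importantly, even with the face $W=cU$ handled, the region is not forward invariant: the unstable manifold can still exit through $W=0$, and ruling this out is the actual heart of the argument (and the reason \cite{Li} only obtained the upper bound in \eqref{LiBnd}). The paper supplies the missing ingredient by inserting a lower barrier $W = \eta U$ and showing that $\dot W - \eta\dot U \ge 0$ on it reduces to a quadratic inequality in $\eta$ whose discriminant is nonnegative precisely when $c \ge 2\sqrt{\mu}$; one then checks $0<\eta<c$ so the barrier sits inside the region. Without this barrier your Wazewski argument does not close, and nothing in your outline produces it. A secondary point: monotonicity of $U$ is global, not just asymptotic, and in the paper it comes for free from the full region (since $\dot U \le W - cU \le 0$ there); the real-eigenvalue argument at the leading edge gives only eventual monotonicity.
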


Theorem~\ref{thm:Main1} improves the upper bound in \eqref{LiBnd} to show that it is {\em independent} of the function $\chi$, thus coinciding with the lower bound. The proof is left to Section~\ref{sec:Proof} where we convert the problem of finding traveling waves to determining the existence of heteroclinic orbits to a dynamical system in the spatial variable $x-ct$. In this spatial dynamical system we construct an appropriate trapping region for $c \geq 2\sqrt{\mu}$ that guarantees the existence of the desired heteroclinic orbit, which is constructed in such a way to give the ordering \eqref{Ordering} and monotonicity results as well. The methods of proving Theorem~\ref{thm:Main1} can be extended to understand traveling wave solutions to \eqref{KS} with $D > 0$, resulting in the following theorem whose proof is left to Section~\ref{sec:Proof2}

\begin{thm}\label{thm:Main2}%Theorem: Main theorem about minimum wave speed of traveling waves for D > 0
	For all $D,\beta,\mu > 0$ and functions $\chi$ satisfying $0 \leq \chi(v) \leq \mu$ for all $v \in [0,\beta^{-1}]$, the system \eqref{KS} has a unique nonnegative traveling wave solution $(u(x,t),v(x,t)) = (U(x-ct),V(x-ct))$ satisfying $(U(-\infty),V(-\infty)) = (1,\beta^{-1})$ and $(U(\infty),V(\infty)) = (0,0)$ if, and only if, $c \geq 2\max\{\sqrt{\mu},\sqrt{D\beta}\}$. Furthermore, this traveling wave satisfies the ordering \eqref{Ordering} for all $x \in \mathbb{R}$ and $t \geq 0$ and the profiles $U,V$ are monotone decreasing.
\end{thm}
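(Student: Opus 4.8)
The plan is to adapt the spatial-dynamics framework behind Theorem~\ref{thm:Main1} to the four-dimensional phase space forced upon us by the chemical diffusion $D>0$. Writing $z=x-ct$ and substituting $(u,v)=(U(z),V(z))$ into \eqref{KS} yields $U''+cU'-(U\chi(V)V')'+\mu U(1-U)=0$ together with $DV''+cV'+\beta V-U=0$. Since $\chi$ is only assumed bounded, I would keep the first equation in divergence form and introduce the chemotactic flux $S:=U'+cU-U\chi(V)V'$, so that $S'=-\mu U(1-U)$ never calls on a derivative of $\chi$. With $Q:=V'$ this produces the first-order system
\begin{equation}\label{dynsys}
  U'=S-cU+U\chi(V)Q,\qquad S'=-\mu U(1-U),\qquad V'=Q,\qquad Q'=\tfrac{1}{D}\bigl(U-\beta V-cQ\bigr),
\end{equation}
whose only equilibria are $e_0=(0,0,0,0)$ and $e_1=(1,c,\beta^{-1},0)$. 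A traveling wave of the required type is exactly a heteroclinic orbit from $e_1$ to $e_0$, and \eqref{Ordering} together with monotonicity becomes the demand that this orbit lie in the region $\{0\le U\le\beta V\le1,\ Q\le0\}$.

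Linearizing \eqref{dynsys} at $e_0$ is the first substantive step, and because the chemotactic term $U\chi(V)Q$ is quadratic the Jacobian is block triangular: the $(U,S)$ block is decoupled with characteristic polynomial $\lambda^2+c\lambda+\mu$, while the $(V,Q)$ block carries $D\lambda^2+c\lambda+\beta$ and is forced by $U$. Their roots are $\tfrac12(-c\pm\sqrt{c^2-4\mu})$ and $\tfrac{1}{2D}(-c\pm\sqrt{c^2-4D\beta})$. For the necessity half of the theorem I would argue that a nonnegative wave cannot approach $e_0$ if either pair is complex: complex $(U,S)$-roots (when $c<2\sqrt{\mu}$) force $U$ to oscillate about zero, and complex $(V,Q)$-roots (when $c<2\sqrt{D\beta}$) force the descent of $V$ from $\beta^{-1}$ to $0$ to overshoot into $V<0$. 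This pins the lower bound at $c\ge2\max\{\sqrt{\mu},\sqrt{D\beta}\}$; the chemical threshold $2\sqrt{D\beta}$ is the genuinely new ingredient, reflecting that $v=0$ is a linearly unstable state of the decoupled chemical equation with spreading speed $2\sqrt{D\beta}$, which the front must outrun.

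For sufficiency I would fix $c\ge2\max\{\sqrt{\mu},\sqrt{D\beta}\}$ and build a bounded, forward-invariant region
\begin{equation}\label{region}
  \mathcal{R}=\bigl\{(U,S,V,Q):\ 0\le U\le\beta V\le1,\ \kappa U\le S\le cU,\ \nu V\le Q\le0\bigr\},
\end{equation}
where $\kappa=\tfrac12(c+\sqrt{c^2-4\mu})$ and $\nu=\tfrac{1}{2D}(-c+\sqrt{c^2-4D\beta})$ are the slopes of the slow stable eigendirections at $e_0$, both well defined precisely because of the speed bound. The heart of the argument is to check that the vector field of \eqref{dynsys} points inward on each face of $\partial\mathcal{R}$. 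Two faces reveal the thresholds cleanly: on $S=\kappa U$ one computes $\tfrac{\drm}{\drm z}(S-\kappa U)=\mu U^2-\kappa U\chi(V)Q\ge0$ using $\kappa(\kappa-c)=-\mu$, $Q\le0$ and $\chi\ge0$, while on $Q=\nu V$ one finds $\tfrac{\drm}{\drm z}(Q-\nu V)=U/D\ge0$ using $D\nu^2+c\nu+\beta=0$. The faces $U=0$, $\beta V=1$ and $Q=0$ are immediate from the signs in \eqref{dynsys}, and inside $\mathcal{R}$ the bounds $Q\le0$ and $S\le cU$ deliver $V'\le0$ and $U'\le0$, giving both the monotonicity of the profiles and the ordering \eqref{Ordering}. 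Since $e_1$ is a saddle whose unstable manifold is one-dimensional (its $(U,S)$ block has eigenvalues $\tfrac12(-c\pm\sqrt{c^2+4\mu})$ of opposite sign, the $(V,Q)$ block being stable), its single outgoing branch enters $\mathcal{R}$; trapped there with $U,V$ monotone and bounded, the orbit converges to the only equilibrium in $\mathcal{R}$, namely $e_0$, producing the wave.

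I expect the main obstacle to be the simultaneous closure of all faces of $\mathcal{R}$: the estimate on the ordering face $\beta V=U$ mixes the chemotactic contribution $Q(\beta-U\chi(V))$ with $cU-S$, whose sign must be controlled using the $S$-bounds, and the $Q$-bounds and $S$-bounds are mutually dependent, so the inequalities defining \eqref{region} cannot be verified in isolation. Reconciling the two diffusion-driven thresholds $2\sqrt{\mu}$ and $2\sqrt{D\beta}$ within one region—and confirming that $\mathcal{R}$ is nonempty and correctly oriented across all parameter regimes—is exactly where the passage from $D=0$ to $D>0$ costs real work, since for $D=0$ the chemical is algebraically slaved through $V'=c^{-1}(U-\beta V)$ whereas here $Q=V'$ is an independent coordinate that must be trapped on its own. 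Uniqueness up to translation then follows from the one-dimensionality of the unstable manifold of $e_1$ and the monotonicity of $U$ along the orbit.
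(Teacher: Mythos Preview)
Your framework is essentially the paper's: the same four-dimensional system (your $(S,Q)$ are the paper's $(W,Y)$), the same necessity argument via complex eigenvalues at $e_0$, and the same strategy of trapping the one-dimensional unstable manifold of $e_1$ in a wedge-shaped region. The difference lies entirely in the geometry of the trapping region, and there your choice does not quite close.

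The paper takes the lower $Y$-bound to be $Y\ge\rho(U-\beta V)$ with $\rho=(c-\sqrt{c^2-4D\beta})/(2D\beta)$, rather than your $Q\ge\nu V$; note $\nu=-\beta\rho$, so the two bounds differ by the term $\rho U$. That extra term is the point: on the ordering face $U=\beta V$ the paper's bound collapses to $0\le Y\le0$, forcing $Y=0$ there, and then $\dot U-\beta\dot V=W-cU\le0$ closes the face with no chemotactic contribution at all. With your bound $Q\ge\nu V$, on $U=\beta V$ you still allow $Q<0$, and then
\[
U'-\beta V'=(S-cU)+Q\bigl(U\chi(V)-\beta\bigr)
\]
can be strictly positive: take $\chi\equiv0$, $S=cU$, $Q=\nu V<0$, and the right side is $-\beta\nu V>0$, so the trajectory exits through $U=\beta V$. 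The same coupling breaks the $S=cU$ face: there $(S-cU)'=-\mu U(1-U)-cU\chi(V)Q$, and at the threshold $c=2\sqrt{D\beta}$ one has $c|\nu|=2\beta$, which is enough to make the positive term $-cU\chi(V)Q$ dominate for $U$ small and $\chi$ near $\mu$. You correctly flag these two faces as the ``main obstacle'', but the obstacle is not merely technical: your region $\mathcal R$ as written is not forward-invariant.

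The remedy is exactly the paper's coupling of the $Q$-floor to $U-\beta V$. Once you replace $\nu V$ by $\rho(U-\beta V)$, the ordering face closes trivially, and the paper then handles the $W$-floor in a second step: it first shows the only possible exit from $\mathcal S=\{0\le U\le\beta V\le1,\ 0\le W\le cU,\ \rho(U-\beta V)\le Y\le0\}$ is through $W=0$, and then blocks that exit with a surface $W=\eta U$ (your $S=\kappa U$ is the borderline version of this, since $\kappa$ is the larger root of $\eta^2-c\eta+\mu=0$). Your lower $S$-face calculation $\tfrac{\drm}{\drm z}(S-\kappa U)=\mu U^2-\kappa U\chi(V)Q\ge0$ is correct and is effectively the paper's surface argument; it is the $Q$-floor, not the $S$-floor, that needs adjusting.
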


%%%%%%%%%%%%%%%%%%%%%%%%%%%%%%%%%%%%%%%%%%%%%%%%%%%%%%%%%%%%%%%%%%%%%%%%%%%%%%%%%%%%%%%%%%%%%%%

\section{Proof of Theorem~\ref{thm:Main1}}\label{sec:Proof} %Section: Main Results

Throughout this section we assume that $D = 0$, $\beta,\mu > 0$ and $\chi$ satisfies $0 \leq \chi(v) \leq \mu$ for all $v \in [0,\beta^{-1}]$. Then, traveling wave solutions of \eqref{KS} take the form $u(x,t) = U(\xi)$ and $v(x,t) = V(\xi)$, where $\xi := x - ct$ and the parameter $c \geq 0$ is the wave speed. With this ansatz $U(\xi)$ and $V(\xi)$ must solve 
\begin{equation}
	\begin{split}
		-c\frac{\drm U}{\drm \xi} &=  \frac{\drm^2U}{\drm \xi^2} - \frac{\drm}{\drm \xi}\bigg(U\chi(V)\frac{\drm V}{\drm \xi}\bigg) + \mu U (1-U), \\
		-c\frac{\drm V}{\drm \xi} &=  \beta V - U,
	\end{split}	
\end{equation}
where we recall that the second equation has no second order derivative since $D = 0$. Introducing $W = cU + \frac{\drm U}{\drm \xi} - \bigg(U\chi(V)\frac{\drm V}{\drm \xi}\bigg)$ results in the three-dimensional ordinary differential equation
\begin{equation}\label{KSODE}
	\begin{split}
		\dot{U} &= -cU + \frac{U\chi(V)(U-\beta V)}{c} + W, \\
		\dot{V} &= \frac{1}{c}(U - \beta V), \\
		\dot{W} &= \mu U(U-1),
	\end{split}
\end{equation}
where the dot represents differentiation with respect to the independent variable $\xi$. We seek a heteroclinic orbit which asymptotically connects the co-operation state $(1,\beta^{-1},c)$ to the origin $(0,0,0)$, representing a traveling invasion wave in the full partial differential equation (\ref{KS}) moving with constant speed $c > 0$. We present the following lemma from \cite{Li} regarding the stability of the origin of \eqref{KSODE}.

\begin{lem}[\cite{Li}]\label{lem:Origin}%Lemma: Linearization about the origin
	The origin of \eqref{KSODE} is locally asymptotically stable for all $c > 0$. In particular, when $0 < c < 2\sqrt{\mu}$ the linearization of \eqref{KSODE} about the origin has a negative real eigenvalue and a pair of complex-conjugate eigenvalues with negative real part, and when $c \geq 2\sqrt{\mu}$ all eigenvalues are real and negative. 
\end{lem}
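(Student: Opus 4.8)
The plan is to establish the lemma by a direct linearization of the vector field \eqref{KSODE} at the origin, since the stated conclusions are purely spectral. The first thing I would observe is that the chemotactic term $\tfrac{1}{c}U\chi(V)(U-\beta V)$ in the $\dot U$ equation vanishes to second order at $(U,V,W)=(0,0,0)$: it carries an explicit factor of $U$, and the remaining factor $\chi(V)(U-\beta V)$ is itself zero at the origin, so every first partial derivative of this term evaluated at the origin vanishes. Consequently this term makes no contribution to the Jacobian, which is precisely why the linearization---and hence the lemma---is independent of the choice of $\chi$.

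With that term discarded, computing the Jacobian $J$ of the right-hand side at the origin is routine, giving
\begin{equation}
J = \begin{pmatrix} -c & 0 & 1 \\ \tfrac{1}{c} & -\tfrac{\beta}{c} & 0 \\ -\mu & 0 & 0 \end{pmatrix}.
\end{equation}
The structural feature I would exploit is that the middle column has only a single nonzero entry, namely $-\beta/c$ in the $(2,2)$ position. Expanding $\det(J-\lambda I)$ along this column therefore immediately factors the characteristic polynomial as
\begin{equation}
\det(J-\lambda I) = -\Big(\lambda + \tfrac{\beta}{c}\Big)\big(\lambda^2 + c\lambda + \mu\big),
\end{equation}
so that one eigenvalue is $\lambda = -\beta/c$ and the other two are the roots of the quadratic $\lambda^2 + c\lambda + \mu = 0$.

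From this factorization the conclusions follow by elementary root analysis. Since $\beta,c > 0$, the eigenvalue $-\beta/c$ is always real and negative. The remaining two eigenvalues are $\lambda_\pm = \tfrac{1}{2}\big(-c \pm \sqrt{c^2 - 4\mu}\big)$, whose character is governed entirely by the sign of the discriminant $c^2 - 4\mu$. When $0 < c < 2\sqrt{\mu}$ the discriminant is negative, so $\lambda_\pm$ form a complex-conjugate pair with real part $-c/2 < 0$; when $c \geq 2\sqrt{\mu}$ the discriminant is nonnegative, $\lambda_\pm$ are real, and both are negative because $\sqrt{c^2 - 4\mu} < c$. In every case all three eigenvalues have negative real part, which yields local asymptotic stability of the origin and matches the two stated regimes exactly.

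I do not expect any substantial obstacle here; the lemma is essentially a bookkeeping exercise and is quoted from \cite{Li}. The only point demanding genuine care is the verification that the nonlinear chemotactic term drops out of the linearization, since overlooking the leading factor of $U$ could mistakenly introduce $\chi(0)$ or $\chi'(0)$ into $J$ and spuriously couple the spectrum to $\chi$. Once that is settled, the column expansion producing the clean factorization is the most convenient route, as it avoids expanding a general cubic and isolates the two regimes through a single discriminant condition.
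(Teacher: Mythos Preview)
Your proof is correct. The paper does not supply its own argument for this lemma---it simply quotes the result from \cite{Li}---so there is no in-paper proof to compare against; your direct linearization, column expansion yielding the factorization $-(\lambda+\beta/c)(\lambda^2+c\lambda+\mu)$, and discriminant analysis constitute exactly the standard verification and match the stated regimes.
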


From Lemma~\ref{lem:Origin} we can see that for $0 < c < 2\sqrt{\mu}$ a heteroclinic orbit (if it exists) would necessarily have negative $U$ and/or $V$ components. This of course violates the condition that we are seeking nonnegative traveling waves and therefore we have the explicit lower bound $c_* \geq 2\sqrt{\mu}$, proving the `only if' portion of Theorem~\ref{thm:Main1}.

Now, for a fixed $c > 0$ we define the region 
\begin{equation}
	\mathcal{R} = \{(U,V,W):\ 0\leq U \leq \beta V \leq 1, 0 \leq W \leq cU\}.	
\end{equation}
We note that in this region the $U,V,$ and $W$ components are monotone decreasing, and therefore it is easily shown that any trajectory that enters and never leaves $\mathcal{R}$ must converge to the trivial equilibrium. This leads to the following lemma.

\begin{lem}\label{lem:Trap} %Lemma: Trapping Region
	For each $c > 0$ the unstable manifold of the equilibrium $(1,\beta^{-1},c)$ of \eqref{KSODE} is one-dimensional and its tangent vector at this equilibrium points to the interior of $\mathcal{R}$. Furthermore, the unstable manifold can only leave $\mathcal{R}$ by crossing $W = 0$.
\end{lem}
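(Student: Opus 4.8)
The plan is to prove Lemma~\ref{lem:Trap} in two stages. First I would establish that the equilibrium $(1,\beta^{-1},c)$ has a one-dimensional unstable manifold by computing the Jacobian of \eqref{KSODE} at this point. At $(U,V,W) = (1,\beta^{-1},c)$ we have $U - \beta V = 0$, which simplifies the chemotactic term; the linearization should reveal exactly one positive eigenvalue with the remaining two eigenvalues having negative real part, so the unstable manifold is one-dimensional. I would then examine the corresponding unstable eigenvector and verify that it points into $\mathcal{R}$ — concretely, that moving along the unstable manifold away from $(1,\beta^{-1},c)$ in the direction of decreasing $\xi$-evolution (i.e.\ the direction the heteroclinic orbit travels) strictly decreases $U$, keeps $U < \beta V$, maintains $\beta V < 1$, and keeps $0 < W < cU$. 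This is a local computation verifying that the signs of the eigenvector components are consistent with entering the interior of $\mathcal{R}$.

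\medskip

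The second and more substantial stage is to show that once inside $\mathcal{R}$, the unstable manifold can only exit through the face $W = 0$. The natural approach is to check the vector field \eqref{KSODE} on each bounding face of $\mathcal{R}$ and confirm that, except on $\{W = 0\}$, the flow either points inward or is tangent along the boundary. The faces to examine are $U = 0$, $U = \beta V$, $\beta V = 1$, $W = cU$, and $W = 0$. On $U = 0$ we have $\dot{U} = W \geq 0$, so trajectories cannot cross to negative $U$. On $\beta V = 1$, using $U \leq \beta V = 1$ gives $\dot{V} = \tfrac{1}{c}(U - 1) \leq 0$, so $V$ decreases and the trajectory stays below. On $U = \beta V$ the chemotactic term vanishes and one compares $\dot{U}$ with $\beta\dot{V}$; on $W = cU$ one computes $\tfrac{\drm}{\drm\xi}(W - cU)$ and uses the bound $\chi(V) \leq \mu$ to control its sign. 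Each of these is a sign check exploiting the constraints defining $\mathcal{R}$ together with the hypothesis $0 \leq \chi(V) \leq \mu$.

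\medskip

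The main obstacle I anticipate is handling the face $W = cU$, since the constraint $0 \leq W \leq cU$ is exactly what encodes the nonnegativity and the wave-speed threshold, and the chemotactic nonlinearity $U\chi(V)(U - \beta V)/c$ interacts with it. On this face one must show $\tfrac{\drm}{\drm\xi}(cU - W) \geq 0$ (so $cU - W$ stays nonnegative), and this will require combining $\dot{W} = \mu U(U-1)$ with $c\dot{U} = -c^2 U + U\chi(V)(U - \beta V) + cW = -c^2U + U\chi(V)(U-\beta V) + c^2 U$ when $W = cU$, whence $c\dot U = U\chi(V)(U-\beta V)$; one then needs $c\dot{U} - \dot{W} = U\chi(V)(U-\beta V) - \mu U(U-1) \geq 0$, i.e.\ $\chi(V)(U - \beta V) \geq \mu(U - 1)$. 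Since $U - \beta V \leq 0$ and $U - 1 \leq 0$ inside $\mathcal{R}$, and $\chi(V) \leq \mu$, this inequality must be verified carefully using all three constraints $U \leq \beta V$, $\beta V \leq 1$, and $\chi(V) \leq \mu$ simultaneously; I expect this to be the crux. Once every face other than $W = 0$ is shown to be ingoing or tangent, it follows that the unstable manifold, having entered the interior by stage one, remains in $\mathcal{R}$ until it possibly crosses $W = 0$, which is the claimed conclusion.
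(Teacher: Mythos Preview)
Your proposal is correct and follows essentially the same route as the paper: the paper cites \cite{Li} for the first statement (rather than computing the Jacobian directly as you plan) and then checks the faces $U=0$, $V=\beta^{-1}$, $U=\beta V$, $W=cU$ exactly as you outline. For the crux on $W=cU$, the inequality $\chi(V)(U-\beta V)\geq \mu(U-1)$ you isolate is verified in the paper via the decomposition $\mu U(U-1)-U\chi(V)(U-\beta V)=(\mu-\chi(V))U(U-\beta V)+\mu U(\beta V-1)$, each term being nonpositive by precisely the three constraints you list.
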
	

\begin{proof}
	The first statement of the proposition detailing that the unstable manifold of $(1,\beta^{-1},c)$ is one-dimensional and that its tangent vector at this equilibrium points to the interior of $\mathcal{R}$ was proven in \cite{Li}. Therefore, we are only left to prove the second statement. We now examine the dynamics on the faces making up the boundary of $\mathcal{R}$ to show that a trajectory which enters $\mathcal{R}$ can only leave by crossing the face $W = 0$.   
	
	On the face $U = 0$ we have $\dot{U} = W$, which is nonnegative for $0 \leq W \leq cU$, and hence trajectories cannot leave $\mathcal{R}$ through this face. Similarly, on the face $V = 1/\beta$ we have $\dot{V} = \frac{1}{c}(U - 1)$, which is nonpositive since $U \leq 1$ in $\mathcal{R}$, and so solutions cannot leave through this face either. On the face $U = \beta V$ we have 
\begin{equation}
	\dot{U} - \beta\dot{V} = W - cU,
\end{equation}
which is nonpositive when $W \leq cU$. Hence, solutions of (\ref{KS}) cannot leave $\mathcal{R}$ through the face $U = \beta V$. Similarly, on the face $W = cU$ we have
\begin{equation}
	\dot{W} - c\dot{U} = \mu U(U-1) - U(U - \beta V)\chi(V) = (\mu - \chi(V))U(U - \beta V) + \mu U(\beta V - 1),
\end{equation}
and since in $\mathcal{R}$ we have $U \leq \beta V$, $\beta V \leq 1$ and we have assumed that $0 \leq \chi(V) \leq \mu$ for all $V \in [0,\beta^{-1}]$, it follows that $\dot{W} - c\dot{U}$ is nonpositive when $W = cU$. Therefore, trajectories cannot leave $\mathcal{R}$ through the face $W = cU$. Hence, the only way by which trajectories may leave $\mathcal{R}$ is by crossing $W= 0$. \qed
\end{proof} %End proof

Using Lemma~\ref{lem:Trap} we find that to guarantee that the unstable manifold of $(1,\beta^{-1},c)$ never leaves $\mathcal{R}$ we require a lower boundary in this set which cannot be crossed by solutions of (\ref{KS}). That is, we seek to construct a surface $W = N(U,V)$ which guarantees that solutions of the differential equation cannot leave through the bottom of $\mathcal{R}$, for appropriate $c > 0$. Precisely, the function $N$ must satisfy: 
\begin{enumerate}
	\item $\frac{\drm}{\drm t}(W - N(U,V)) \geq 0$ along trajectories of (\ref{KSODE}) on $W = N(U,V)$ , 
	\item $0 \leq N(U,V) \leq cU$ for all $0 \leq U \leq \beta V \leq 1$.
	\item $N(0,0) = 0$.
\end{enumerate}
Condition(1) implies that trajectories can only cross the surface $W = N(U,V)$ from $W > N(U,V)$ to $W < N(U,V)$. Condition (2) guarantees that the surface $N$ lies inside of the region $\mathcal{R}$ and Condition (3) guarantees that the can be reached by trajectories bounded below by the surface $N$. Together these conditions mean that if such a surface can be found, then trajectories that start above $N$ in $\mathcal{R}$ cannot leave $\mathcal{R}$, and therefore must converge to the origin $(0,0,0)$ as $\xi \to \infty$. This leads to the following result, which together with our previous discussion proves Theorem~\ref{thm:Main1}.

\begin{thm}\label{thm:Het1} %Theorem: Minimum wave speed is always attained
	For all $c \geq 2\sqrt{\mu}$ system (\ref{KSODE}) has a heteroclinic orbit from $(1,\beta^{-1},c)$ to $(0,0,0)$ which remains in $\mathcal{R}$ for all $\xi \in \R$.
\end{thm}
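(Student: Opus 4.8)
The plan is to realize the three conditions on the lower barrier $W = N(U,V)$ laid out just before the statement by exhibiting an explicit surface, and to show that its existence is governed exactly by the threshold $c \geq 2\sqrt{\mu}$. By Lemma~\ref{lem:Trap} the one-dimensional unstable manifold of $(1,\beta^{-1},c)$ enters $\mathcal{R}$ and can escape only through the face $W = 0$; so if I can produce a barrier $N$ satisfying conditions (1)--(3) above which the orbit begins, the orbit will be confined to $\{(U,V,W) \in \mathcal{R}:\ W \geq N(U,V)\}$ for all forward $\xi$, whereupon the monotone-decreasing structure of $(U,V,W)$ inside $\mathcal{R}$ noted before Lemma~\ref{lem:Trap} forces convergence to the only other equilibrium in $\mathcal{R}$, namely the origin.

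I would try the simplest candidate, a linear surface $N(U,V) = \gamma U$ for a constant $\gamma \in (0,c)$ to be chosen. Condition (3) is immediate and condition (2) reduces to $0 \leq \gamma \leq c$. The substance is condition (1): a direct computation on $W = \gamma U$ gives
\begin{equation}
	\dot{W} - \gamma \dot{U} = U\bigg[\mu(U-1) + \gamma c - \gamma^2 - \frac{\gamma \chi(V)(U - \beta V)}{c}\bigg].
\end{equation}
Inside $\mathcal{R}$ we have $U \geq 0$, $U - \beta V \leq 0$ and $\chi(V) \geq 0$, so the chemotactic term is nonnegative and may be dropped; moreover $\mu(U-1) \geq -\mu$ since $0 \leq U \leq 1$. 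Thus the bracket is bounded below by $-(\gamma^2 - c\gamma + \mu)$, and condition (1) holds as soon as $\gamma^2 - c\gamma + \mu \leq 0$.

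This is precisely where the wave-speed threshold enters: the quadratic $\gamma^2 - c\gamma + \mu$ has real roots $\gamma_\pm = \tfrac{1}{2}(c \pm \sqrt{c^2 - 4\mu})$ exactly when $c \geq 2\sqrt{\mu}$, and any $\gamma \in [\gamma_-,\gamma_+]$ then makes it nonpositive. Because $0 < \gamma_- \leq \gamma_+ < c$ (using $0 < \sqrt{c^2-4\mu} < c$), such a $\gamma$ also satisfies condition (2). Fixing one such $\gamma$, at the equilibrium $(1,\beta^{-1},c)$ we get $W - \gamma U = c - \gamma > 0$, so the unstable manifold starts strictly above the barrier, and by condition (1) it can never cross below. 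Hence $W \geq \gamma U \geq 0$ along the whole orbit, so $W$ cannot become negative; since by Lemma~\ref{lem:Trap} the face $W = 0$ is the only possible exit from $\mathcal{R}$, that exit is now precluded and the orbit remains in $\mathcal{R}$ for all $\xi \in \R$. The monotonicity remarks preceding Lemma~\ref{lem:Trap} then deliver convergence to the origin, completing the heteroclinic connection.

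I expect the only real obstacle to be guessing the correct form of the barrier: once the linear ansatz $N = \gamma U$ is chosen, the verification is a short sign-chasing argument and the decisive role of $c \geq 2\sqrt{\mu}$ surfaces transparently as the solvability condition for $\gamma^2 - c\gamma + \mu \leq 0$. The one place warranting a little care is the passage from ``cannot cross the barrier and cannot cross the remaining faces'' to genuine confinement and convergence, which I would settle by appealing directly to the monotone-decreasing behaviour of $(U,V,W)$ in $\mathcal{R}$ rather than via a more delicate invariant-region or LaSalle-type argument.
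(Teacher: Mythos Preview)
Your argument is correct and follows essentially the same route as the paper: the same linear barrier $N(U,V)=\gamma U$, the same reduction of condition~(1) to a quadratic inequality in $\gamma$, and the same identification of $c\geq 2\sqrt{\mu}$ as the solvability threshold. Your treatment is in fact slightly cleaner than the paper's, since you immediately discard the chemotactic term $-\gamma\chi(V)(U-\beta V)/c$ as a nonnegative contribution and bound $\mu(U-1)\geq -\mu$, obtaining the $(U,V)$-independent inequality $\gamma^{2}-c\gamma+\mu\leq 0$ directly; the paper instead carries the $(U,V)$-dependent coefficients through a discriminant analysis and then locates the vertex $\eta_{\mathrm{crit}}$ to pin down an admissible $\eta$.
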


\begin{proof}
	Based on the preceding discussion, we simply require finding a surface $W = N(U,V)$ which satisfies the conditions listed above. Let us take $N(U,V) = \eta U$, for a yet-to-be specified $\eta > 0$, and note that Condition (3) is immediately satisfied. Then, for $N$ to satisfy Condition (1) requires finding $c,\eta > 0$ such that 
	\begin{equation}
		0 \leq \dot{W} - N_U\dot{U} - N_V\dot{V} = \mu U(U-1) - \eta\bigg[-cU + \frac{U(U-\beta V)\chi(V)}{c} + \eta U\bigg].
	\end{equation}
	Rearranging this expression now gives 
	\begin{equation}\label{etaQuad}
		\eta^2 +\bigg[ \frac{(U-\beta V)\chi(V)}{c} - c\bigg]\eta + \mu(1-U) \leq 0,
	\end{equation}
	which is a quadratic expression in $\eta > 0$ with positive leading term. Therefore, to satisfy \eqref{etaQuad} we require that the discriminant of the left-hand-side is nonnegative, i.e.
	\begin{equation}\label{Discriminant}
		\bigg[ \frac{(U-\beta V)\chi(V)}{c} - c\bigg]^2 \geq 4\mu(1-U),	
	\end{equation}
	For all $0 \leq U \leq \beta V \leq 1$ we have that $\frac{(U-\beta V)\chi(V)}{c} \leq 0$, so that the left-hand-side of the inequality (\ref{Discriminant}) can be minimized over all relevant $0 \leq U \leq \beta V \leq 1$ to find that we equivalently require 
	\begin{equation}
		c^2 \geq 4\mu(1-U) \implies c^2 \geq 4\mu,
	\end{equation} 
	giving that $c \geq 2\sqrt{\mu}$. 
	
	Now that we have shown that there exists values of $\eta \in \mathbb{R}$ that satisfy \eqref{etaQuad} for all $c \geq 2\sqrt{\mu}$, we now show that such an $\eta$ can be chosen to guarantee that Condition (2) for $N(U,V) = \eta U$ is satisfied. That is, that an $\eta$ which satisfies \eqref{etaQuad} also satisfies $0 < \eta \leq c$. First, since the discriminant \eqref{Discriminant} is nonnegative for all $c \geq 2\sqrt{\mu}$, the quadratic expression in $\eta$ has two roots in $\eta$ for all $0 \leq U \leq \beta V\leq 1$ and the expression \eqref{etaQuad} is negative between these roots. 
	
	Let us take $\eta = \eta_\mathrm{crit}$ to be the $\eta$-value of the critical point of \eqref{etaQuad}. The location of $\eta_\mathrm{crit}$ is given by
	 \begin{equation}\label{etaCrit}
	 	0 < \eta_\mathrm{crit} = \frac{c}{2} - \frac{\chi(V)(U - \beta V)}{2c} \leq \frac{c}{2} + \frac{\chi(V)}{2c} \leq \frac{c}{2} + \frac{\sqrt{\mu}}{4} \leq \frac{c}{2} + \frac{c}{8} = \frac{5c}{8} < c, 
	 \end{equation}
	 where we have used the fact that $c \geq 2\sqrt{\mu}$ and $\chi(V) \leq \mu$ for all $V \in [0,\beta^{-1}]$. Hence, taking $\eta = 5c/8$ gives that the surface satisfies Conditions (1) and (2) for every $c \geq 2\sqrt{\mu}$, thus giving that a heteroclinic orbit of (\ref{KSODE}) that remains in $\mathcal{R}$ for all $\xi\in\R$ exists for all $c \geq 2\sqrt{\mu}$. This concludes the proof. \qed
\end{proof} %End of proof

\section{Proof of Theorem~\ref{thm:Main2}}\label{sec:Proof2} %Section: Main Results for D > 0

Due to the relative simplicity of the case when $D = 0$ compared to the work in \cite{Li}, it is now a fairly straightforward task to extend the results to all $D > 0$. Introducing again $u(x,t) = U(\xi)$ and $v(x,t) = V(\xi)$, where $\xi = x - ct$, into \eqref{KS} with $D > 0$ implies that $U(\xi)$ and $V(\xi)$ must solve 
\begin{equation}
	\begin{split}
		-c\frac{\drm U}{\drm \xi} &=  \frac{\drm^2U}{\drm \xi^2} - \frac{\drm}{\drm \xi}\bigg(U\chi(V)\frac{\drm V}{\drm \xi}\bigg) + \mu U (1-U), \\
		-c\frac{\drm V}{\drm \xi} &= D\frac{\drm^2V}{\drm \xi^2} +  \beta V - U,
	\end{split}	
\end{equation}
We again introduce $W = cU + \frac{\drm U}{\drm \xi} - \bigg(U\chi(V)\frac{\drm V}{\drm \xi}\bigg)$ and now set $Y = \frac{\drm V}{\drm \xi}$, to arrive at the four-dimensional ordinary differential equation
\begin{equation}\label{KSODE2}
	\begin{split}
		\dot{U} &= -cU + U\chi(V)Y + W, \\
		\dot{V} &= Y, \\
		\dot{Y} &= -cD^{-1}Y + D^{-1}(U - \beta V), \\
		\dot{W} &= \mu U(U-1).
	\end{split}
\end{equation}
In the system \eqref{KSODE2} the traveling waves of interest are heteroclinic orbits connecting $(U,V,Y,W) = (1,\beta^{-1},0,c)$ to the origin $(U,V,Y,W) = (0,0,0,0)$. We now state the following lemma, analogous to Lemma~\ref{lem:Origin}, without proof since it can be calculated directly.

\begin{lem}\label{lem:Origin2}
	The origin of \eqref{KSODE2} is locally asymptotically stable for all $c > 0$. In particular, when $0 < c < 2\max\{\sqrt{\mu},\sqrt{D\beta}\}$ the linearization of \eqref{KSODE} about the origin has at least one pair of complex-conjugate eigenvalues with negative real part, and when $c \geq 2\max\{\sqrt{\mu},\sqrt{D\beta}\}$ all eigenvalues are real and negative.	
\end{lem}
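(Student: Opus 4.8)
The plan is to linearize \eqref{KSODE2} about the origin and to exploit a convenient block structure of the resulting Jacobian. The first observation is that the only genuinely nonlinear terms in \eqref{KSODE2}, namely the chemotactic product $U\chi(V)Y$ in the $\dot U$ equation and the quadratic piece $\mu U^2$ contained in $\mu U(U-1)$, contribute nothing at first order, since each is a product of state variables that all vanish at the origin. Hence the linearization reduces to the constant-coefficient system $\dot U = -cU + W$, $\dot V = Y$, $\dot Y = D^{-1}(U-\beta V) - cD^{-1}Y$, and $\dot W = -\mu U$.

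The key structural observation is that, after reordering the coordinates as $(U,W,V,Y)$, the Jacobian becomes block lower-triangular. The variables $V$ and $Y$ never appear in the $\dot U$ or $\dot W$ equations, so the only coupling between the $(U,W)$ block and the $(V,Y)$ block is the appearance of $U$ in the $\dot Y$ equation; this places a single nonzero entry in the lower-left block while leaving the upper-right block identically zero. Consequently the characteristic polynomial factors as the product of the characteristic polynomials of the two $2\times 2$ diagonal blocks, which I would compute to be
\begin{equation}
	(\lambda^2 + c\lambda + \mu)(\lambda^2 + cD^{-1}\lambda + \beta D^{-1}) = 0.
\end{equation}

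From here the conclusion follows by elementary analysis of two quadratics. Each factor has strictly negative sum of roots ($-c$ and $-cD^{-1}$, respectively) and strictly positive constant term ($\mu$ and $\beta D^{-1}$), so in every case both roots of each factor have negative real part; this already establishes local asymptotic stability of the origin for all $c > 0$. To distinguish real from complex roots I would inspect the two discriminants: the first factor has real roots precisely when $c \geq 2\sqrt{\mu}$ and the second precisely when $c \geq 2\sqrt{D\beta}$. All four eigenvalues are therefore real and negative exactly when both conditions hold, i.e. when $c \geq 2\max\{\sqrt{\mu},\sqrt{D\beta}\}$, and at least one complex-conjugate pair with negative real part survives whenever $0 < c < 2\max\{\sqrt{\mu},\sqrt{D\beta}\}$.

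I do not anticipate a genuine obstacle here, consistent with the authors' remark that the lemma can be calculated directly. The only point requiring care is verifying the block-triangular structure, since it is precisely the one-directional coupling of $(V,Y)$ to $(U,W)$ at the linear level that makes the characteristic polynomial factor so cleanly into a $\mu$-dependent quadratic and a $D\beta$-dependent quadratic. Once that factorization is in hand, the eigenvalue count and the threshold $2\max\{\sqrt{\mu},\sqrt{D\beta}\}$ are immediate.
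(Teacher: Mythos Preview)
Your proposal is correct and is exactly the direct computation the paper has in mind; the authors explicitly state the lemma ``without proof since it can be calculated directly,'' and your block-triangular reordering $(U,W,V,Y)$ yielding the factorization $(\lambda^{2}+c\lambda+\mu)(\lambda^{2}+cD^{-1}\lambda+\beta D^{-1})=0$ is the natural way to carry it out.
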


Again we see that to have nonnegative traveling waves we require $c \geq 2\max\{\sqrt{\mu},\sqrt{D\beta}\}$, making up the `only if' portion of Theorem~\ref{thm:Main2}. Now, let us restrict our attention to the case that $c \geq 2\max\{\sqrt{\mu},\sqrt{D\beta}\}$ and define 
\begin{equation}\label{rho}
	\rho := (c - \sqrt{c^2 - 4D\beta})/2D\beta > 0.
\end{equation} 
This leads to the definition of the region
\begin{equation}
	\mathcal{S} = \{(U,V,Y,W):\ 0 \leq U\leq \beta V\leq 1,\ 0\leq W \leq cU,\ \rho(U-\beta V) \leq Y \leq 0\}.
\end{equation}
Notice that if a heteroclinic orbit remains in $\mathcal{S}$ for all $\xi \in \mathbb{R}$, we again obtain the desired ordering of the solutions \eqref{Ordering} and the monotonicity of the $U,V,$ and $W$ components. This leads to the following lemma.

\begin{lem}\label{lem:Trap2} %Lemma: Trapping Region 2
	For each $c > 0$ the unstable manifold of the equilibrium $(1,\beta^{-1},0,c)$ of \eqref{KSODE2} is one-dimensional and its tangent vector at this equilibrium points to the interior of $\mathcal{S}$. Furthermore, the unstable manifold can only leave $\mathcal{S}$ by crossing $W = 0$.
\end{lem}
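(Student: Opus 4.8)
The plan is to follow the two-part structure of the proof of Lemma~\ref{lem:Trap}: first establish that $(1,\beta^{-1},0,c)$ has a one-dimensional unstable manifold whose tangent vector is directed into $\mathcal{S}$, and then examine the flow on each bounding face of $\mathcal{S}$ to conclude that the only face trajectories can cross outward is $W=0$. The clean faces will go through essentially as in the planar case; the real difficulty is concentrated in a single face, which I identify below.

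For the first statement I would linearise \eqref{KSODE2} at $(1,\beta^{-1},0,c)$ and compute the characteristic polynomial of the Jacobian, which factors as
\begin{equation}
	P(\lambda) = \big(\lambda^2 + cD^{-1}\lambda + \beta D^{-1}\big)\big(\lambda^2 + c\lambda - \mu\big) - D^{-1}\chi(\beta^{-1})\lambda^2 .
\end{equation}
Since $P(0) = -\mu\beta D^{-1} < 0$ while $P(\lambda)\to +\infty$ as $\lambda\to\infty$, there is at least one positive real eigenvalue, and because the product of the four eigenvalues equals $P(0)<0$ a sign/Routh--Hurwitz count pins this down to exactly one eigenvalue with positive real part, so the unstable manifold is one-dimensional. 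I would then compute the corresponding eigenvector and verify that, oriented into $\mathcal{S}$, it decreases $U$, $V$, and $W$ while making $Y$ negative and respecting the strict orderings $U<\beta V$, $W<cU$, and $Y>\rho(U-\beta V)$; this is the four-dimensional analogue of the computation of \cite{Li}.

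For the second statement I would check each face in turn. On $U=0$ we have $\dot U = W \geq 0$; on $V=\beta^{-1}$ we have $\dot V = Y \leq 0$; and on $Y=0$ we have $\dot Y = D^{-1}(U-\beta V)\leq 0$, so none can be crossed outward. On $U=\beta V$ the bounds $\rho(U-\beta V)\leq Y\leq 0$ force $Y=0$, whence $\dot U - \beta\dot V = W - cU \leq 0$. The face $Y=\rho(U-\beta V)$ is precisely where the definition \eqref{rho} enters: using that $\rho$ is the root of $D\beta\rho^2 - c\rho + 1 = 0$, one finds after simplification that
\begin{equation}
	\dot Y - \rho(\dot U - \beta\dot V) = \rho\big[\,cU - W - \rho U\chi(V)(U-\beta V)\,\big] \geq 0,
\end{equation}
since $W\leq cU$ and $U\leq\beta V$ throughout $\mathcal{S}$, so this face cannot be crossed either.

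The hard part, and where I expect the main obstacle to lie, is the face $W=cU$, where
\begin{equation}
	\dot W - c\dot U = \mu U(U-1) - cU\chi(V)Y .
\end{equation}
The growth term $\mu U(U-1)$ is nonpositive, but the chemotactic term $-cU\chi(V)Y$ is \emph{nonnegative} since $Y\leq 0$. In the planar case $D=0$ this was harmless because $Y$ was slaved to $(U-\beta V)/c$; here $Y$ is a free coordinate that may fall to $\rho(U-\beta V)$, and inserting the worst case $\chi(V)=\mu$, $Y=\rho(U-\beta V)$ reduces the desired inequality to $U-1 \leq c\rho(U-\beta V)$, which forces $c\rho\leq 1$ as $\beta V\to 1$. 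This is in direct tension with the previous face: invariance of $Y=\rho(U-\beta V)$ requires $\rho$ to be the above quadratic root, giving $c\rho = 2c/(c+\sqrt{c^2-4D\beta}) \geq 1$. Hence a face-by-face argument as in Lemma~\ref{lem:Trap} cannot close on its own once $D\beta>0$. To overcome this I would argue at the level of the single unstable trajectory rather than the whole boundary: the equilibrium $(1,\beta^{-1},0,c)$ lies on $W=cU$ with $\dot W - c\dot U$ vanishing there, and the inward tangent found above gives $W<cU$ immediately after departure; the crux is then to show $W<cU$ persists along the \emph{entire} unstable manifold, so that it meets the offending face only at the corner. Establishing this global confinement, in place of a naive invariance check on $W=cU$, is where I expect the real work to lie.
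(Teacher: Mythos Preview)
Your treatment of the two $Y$--faces is exactly what the paper does: on $Y=0$ one has $\dot Y=D^{-1}(U-\beta V)\le 0$, and on $Y=\rho(U-\beta V)$ the choice of $\rho$ as a root of $D\beta\rho^{2}-c\rho+1=0$ kills the linear-in-$Y$ part and leaves $\dot Y-\rho(\dot U-\beta\dot V)=\rho\big[cU-W-\rho U\chi(V)(U-\beta V)\big]\ge 0$, precisely your computation. For all the remaining faces the paper simply writes ``much of the proof is the same as the proof of Lemma~\ref{lem:Trap}'' and does not revisit $W=cU$ at all.

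The difficulty you isolate on $W=cU$ is therefore not something the paper addresses: it is a gap you have uncovered in the paper's own argument. Your calculation
\[
\dot W - c\dot U \;=\; \mu U(U-1) - cU\chi(V)Y
\]
is correct, and your worst-case analysis is sound: with $\chi=\mu$, $Y=\rho(U-\beta V)$ and $\beta V$ close to $1$ this reduces to $\mu U(U-1)(1-c\rho)$, which is strictly positive since $c\rho=2c/(c+\sqrt{c^{2}-4D\beta})>1$ whenever $D\beta>0$. Thus the face $W=cU$ is \emph{not} forward invariant for the flow on $\mathcal S$, and the three-dimensional argument from Lemma~\ref{lem:Trap} genuinely fails to transfer. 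In the $D=0$ case the key identity $\dot V=(U-\beta V)/c$ made the chemotactic term exactly comparable to $\mu U(U-1)$; here $Y$ is a free coordinate and can be as negative as $\rho(U-\beta V)$, which is too negative by the factor $c\rho>1$.

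Your proposed fix --- argue at the level of the single unstable trajectory rather than the whole face --- is a reasonable direction, and is in fact consistent with what the paper ultimately relies on: in the proof of Theorem~\ref{thm:Het2} the trajectory is trapped above an interior surface $W=\eta U$ with $\eta<c$, so the face $W=cU$ is never actually tested after the initial departure from the equilibrium. One clean way to close the gap is to fold that surface into the lemma itself (replace $0\le W\le cU$ by $\eta U\le W\le cU$ and check $W=cU$ together with the sub-face); another is your route of showing directly that the unstable manifold, once it has $W<cU$, cannot regain $W=cU$ without first violating another constraint. Either way, you are right that this is where the real work lies, and the paper's one-line deferral to the $D=0$ case does not supply it.
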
	

\begin{proof}
	Much of the proof is the same as the proof of Lemma~\ref{lem:Trap}, so we focus exclusively on the addition of the $Y$ variable which distinguishes the region $\mathcal{S}$ from $\mathcal{R}$. We first note that if $Y = 0$ we have 
	\begin{equation}
		\dot{Y} = D^{-1}(U - \beta V), 
	\end{equation}
	which is nonnegative for $0 \leq U \leq \beta V \leq 1$. Hence, the unstable manifold of $(1,\beta^{-1},0,c)$ cannot leave $\mathcal{S}$ through the face $Y = 0$. Then, on the face $Y = \rho(U-\beta V)$ we have 
	\begin{equation}\label{YTrap}
		\dot{Y} - \rho(\dot{U} - \beta\dot{V}) = \underbrace{\bigg(\beta\rho +\frac{1}{D\rho} - \frac{c}{D}\bigg)}_{=0}Y - \rho(W - cU + U\chi(V)Y)
	\end{equation}
	where the first term on the right of the equality vanishes from our choice of $\rho > 0$ in \eqref{rho}. When $0 \leq W \leq cU$ and $Y \leq 0$ the second term on the right of the equality in \eqref{YTrap} is nonnegative, implying that the unstable manifold of $(1,\beta^{-1},0,c)$ cannot leave $\mathcal{S}$ through the face $Y = \rho(U-\beta V)$. This concludes the proof. \qed   
\end{proof} %End of trapping region proof 2

From Lemmas~\ref{lem:Origin2} and \ref{lem:Trap2} we see that the situation is now completely analogous to the previous section when $D = 0$. This leads to the final result of the section, which together with the previous lemmas proves Theorem~\ref{thm:Main2}.

\begin{thm}\label{thm:Het2} %Theorem: Minimum wave speed is always attained
	For all $c \geq 2\max\{\sqrt{\mu},\sqrt{D\beta}\}$ system (\ref{KSODE2}) has a heteroclinic orbit from $(1,\beta^{-1},0,c)$ to $(0,0,0,0)$ which remains in $\mathcal{S}$ for all $\xi \in \R$.
\end{thm}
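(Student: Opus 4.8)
The plan is to mirror the barrier construction from the proof of Theorem~\ref{thm:Het1}, now accounting for the extra variable $Y$. By Lemma~\ref{lem:Trap2}, the one-dimensional unstable manifold of $(1,\beta^{-1},0,c)$ points into $\mathcal{S}$ and can only escape through the face $W = 0$. It therefore suffices to build a lower barrier surface $W = N(U,V,Y)$ sitting inside $\mathcal{S}$ that trajectories cannot cross from above; this will confine the unstable manifold to $\mathcal{S}$ and, by the monotonicity already noted for $\mathcal{S}$, force convergence to the origin.

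First I would posit the same linear ansatz $N = \eta U$ for a constant $\eta > 0$, so that the analogue of Condition (3) from the previous section holds automatically. On the surface $W = \eta U$, the requirement that $\frac{\drm}{\drm \xi}(W - \eta U) \geq 0$ reduces, after factoring out the nonnegative factor $U$, to the quadratic inequality
\[
	\eta^2 - \big[c - \chi(V)Y\big]\eta + \mu(1-U) \leq 0.
\]
This is precisely the $D = 0$ quadratic \eqref{etaQuad} with the term $\frac{(U-\beta V)\chi(V)}{c}$ replaced by $\chi(V)Y$; indeed, when $D = 0$ one has $Y = \dot V = \frac{1}{c}(U - \beta V)$, so the two coincide. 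The decisive structural fact is that on $\mathcal{S}$ we have $Y \leq 0$, hence $c - \chi(V)Y \geq c > 0$ and the discriminant satisfies $[c - \chi(V)Y]^2 \geq c^2$. Requiring this discriminant to dominate $4\mu(1 - U)$ for all $0 \leq U \leq 1$ then yields exactly $c \geq 2\sqrt{\mu}$, just as before.

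The role of the second threshold $c \geq 2\sqrt{D\beta}$ is entirely separate: it is what makes $\rho$ in \eqref{rho} real and positive, so that $\mathcal{S}$ and the face $Y = \rho(U - \beta V)$ handled in Lemma~\ref{lem:Trap2} are well-defined at all. Thus the combined hypothesis $c \geq 2\max\{\sqrt{\mu},\sqrt{D\beta}\}$ arises from two independent sources, one governing the barrier and one governing the trapping region itself.

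The step I expect to require the most care is verifying the analogue of Condition (2), namely that some $\eta$ solving the quadratic also lies in $(0,c]$, so the barrier stays inside $\mathcal{S}$. Taking $\eta$ at the vertex gives $\eta_{\mathrm{crit}} = \tfrac{1}{2}(c - \chi(V)Y)$, and I would bound $-Y \leq \rho$ using $U - \beta V \geq -1$ on $\mathcal{S}$, together with the rationalized estimate $\rho = 2/(c + \sqrt{c^2 - 4D\beta}) \leq 2/c$. Combining these with $\chi(V) \leq \mu$ and $c \geq 2\sqrt{\mu}$ produces $\eta_{\mathrm{crit}} \leq \tfrac{c}{2} + \tfrac{\mu}{c} \leq \tfrac{3c}{4} < c$, so the choice $\eta = 3c/4$ satisfies both conditions uniformly for every admissible $c$. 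With the barrier in place the unstable manifold cannot leave $\mathcal{S}$ and must limit to $(0,0,0,0)$ as $\xi \to \infty$, completing the proof.
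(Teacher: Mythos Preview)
Your argument is essentially the paper's own proof: the same linear barrier $N=\eta U$, the same quadratic in $\eta$, the same discriminant bound producing $c\geq 2\sqrt{\mu}$, and the same estimate $\rho\leq 2/c$ (via rationalization) to control $\eta_{\mathrm{crit}}$, arriving at $\tfrac{c}{2}+\tfrac{c}{4}<c$ just as the paper does.

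One technical slip you share with the paper: fixing a \emph{constant} $\eta=3c/4$ (or the paper's $5c/8$) does not actually satisfy Condition~(1) uniformly. At the critical speed $c=2\sqrt{\mu}$ with $U\to 0$ and $\chi(V)Y=0$ the quadratic collapses to $(\eta-\sqrt{\mu})^2$, whose only zero is $\eta=c/2$; any other constant choice makes the quadratic strictly positive on a neighbourhood in $\mathcal{S}$, so the barrier fails there. Bounding $\eta_{\mathrm{crit}}\leq 3c/4$ only says the vertex lies below $c$, not that $3c/4$ sits between the two roots for every admissible $(U,V,Y)$. The easy repair is to take $\eta=c/2$: since $B:=c-\chi(V)Y\geq c$ and $C:=\mu(1-U)\leq c^2/4$, one checks $r_-=2C/(B+\sqrt{B^2-4C})\leq 2\mu/c\leq c/2$ and $r_+\geq B/2\geq c/2$, so $\eta=c/2$ lies between the roots for all points of $\mathcal{S}$ and satisfies $0<\eta\leq c$. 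With that correction your proof is complete and matches the paper's route.
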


\begin{proof}
	The proof is almost identical to that of Theorem~\ref{thm:Het1} since we only make use of the $\dot{U}$ and $\dot{W}$ equations. The only major difference is the handling of \eqref{etaCrit}. That is, in the present case we get
	\begin{equation}\label{etaCrit2}
		0 \leq \eta_\mathrm{crit} =  \frac{c}{2} - \frac{\chi(V)Y}{2} \leq  \frac{c}{2} - \frac{\rho\chi(V)(U - \beta V)}{2},   
	\end{equation} 
	using the fact that $\rho(U - \beta V) \leq Y$ in $\mathcal{S}$. Now, 
	\begin{equation}
		\rho = \frac{c - \sqrt{c^2 - 4D\beta}}{2D\beta} \leq \frac{2}{c},  
	\end{equation}
	for all $c \geq 2\sqrt{D\beta}$. Hence, \eqref{etaCrit2} becomes
	\begin{equation}
		\eta_\mathrm{crit} \leq \frac{c}{2} - \frac{\chi(V)}{c} \leq \frac{c}{2} + \frac{c}{4} < c,
	\end{equation} 
	making the surface $N(U) < cU$ for all relevant parameter values. This completes the proof. \qed
\end{proof}

\section*{Acknowledgements}

\noindent The author was supported by a PIMS PDF held at the University of Victoria.

\end{document}